\documentclass[11pt,letterpaper,reqno]{amsart}

\usepackage{amsmath,amsthm,amssymb,thmtools,tikz-cd,comment,hyperref,cleveref,enumitem,breqn,float,caption,subfig,ytableau,tikz,cases,multirow,mathrsfs}
\usepackage{xcolor}
\usetikzlibrary{positioning, arrows.meta, calc}
\usepackage[T1]{fontenc}

\usetikzlibrary{decorations.pathreplacing,
	calligraphy,
	matrix}

\allowdisplaybreaks
\theoremstyle{plain}
\newtheorem{theorem}{Theorem}[section]
\newtheorem{lemma}[theorem]{Lemma}
\newtheorem{proposition}[theorem]{Proposition}

\newtheorem{thmalphabetintro}{Theorem}

\newtheorem{thmalphabetmaintext}{Theorem}

\theoremstyle{definition}

\newtheorem{example}[theorem]{Example}

\newtheorem{conjecture}[theorem]{Conjecture}
\newtheorem{question}[theorem]{Question}
\newtheorem{notation}[theorem]{Notation}

\newtheorem{open problem}[theorem]{Open Problem}
\newtheorem{remark and notation}[theorem]{Remark and Notation}
\newtheorem{remark and definition}[theorem]{Remark and Definition}
\newtheorem{definition and notation}[theorem]{Definition and Notation}
\newtheorem{notation and convention}[theorem]{Notation and convention}
\newtheorem{convention and notation}[theorem]{Convention and notation}

\def \p {\mathbb{P}}

\def \z {\mathbb{Z}}
\def \r {\mathbb{R}}
\def \c {\mathbb{C}}

\def \q {\mathbb{Q}}

\def \s {\mathbb{S}}

\def \Im {\operatorname{Im}}

\def \rank {\operatorname{rank}}

\def \det {\operatorname{det}}
\def \per {\operatorname{perm}}
\def \rk {\operatorname{\mathbf{R}}}
\def \wrk {\operatorname{\mathbf{WR}}}
\def \crk {\operatorname{\mathbf{CR}}}
\def \brk {\operatorname{\underline{\mathbf{R}}}}
\def \bwrk {\operatorname{\underline{\mathbf{WR}}}}
\def \bcrk {\operatorname{\underline{\mathbf{CR}}}}
\def \Id {\operatorname{Id}}
\def \sgn {\operatorname{sgn}}
\def \Hom {\operatorname{Hom}}
\def \End {\operatorname{End}}
\def \GL {\operatorname{GL}}
\def \SL {\operatorname{SL}}

\def \la {\langle}
\def \ra {\rangle}
\def \ld {\lambda}
\def \lt {\widetilde{\lambda}}
\def \Sym {\operatorname{Sym}}
\def \wt {\widetilde}
\def \Stab {\operatorname{Stab}}

\title[The border Waring rank of $x_1\cdots x_n$ is $2^{n-1}$]{The border Waring rank of $x_1\cdots x_n$ is $2^{n-1}$}
\author{Jong In Han}
\address{Jong In Han, School of Mathematics, Korea Institute for Advanced Study (KIAS), 85 Hoegi-ro, Dongdaemun-gu, Seoul, 02455, Republic of Korea}
\email{jihan09@kias.re.kr}

\keywords{border Waring rank, squarefree monomial, permanent tensor, determinant tensor}
\subjclass[2020]{Primary 14N07; Secondary 15A15, 15A69}

\begin{document}
\begin{abstract}
	In this paper, we show that the border Waring rank of $x_1x_2\cdots x_n$ over fields of characteristic zero is exactly $2^{n-1}$. As a consequence, the classical polarization identity is an optimal Waring decomposition even if we allow limits. As a symmetric tensor, this monomial is identified with the $n\times n$ permanent tensor. However, the lower bound is proved via the higher-order Koszul flattening of the $n\times n$ determinant tensor, which is not symmetric.
\end{abstract}
\maketitle

\section{Introduction}\label{sec:introduction}

The identity
\begin{equation}\label{eqn:identity}
	x_1x_2\cdots x_n=\frac{1}{2^{n-1}n!}\sum_{\substack{(\delta_1,\ldots,\delta_n)\in\{1,-1\}^{n}\\ \delta_1=1}}\left(\prod_{k=1}^n\delta_k\right)\left(\sum_{j=1}^n\delta_j x_j\right)^n
\end{equation}
is a classical polarization identity.
It follows from the polarization formula of Mazur and Orlicz \cite{zbMATH02540533} while the explicit form is given by Fischer \cite{MR1573008}.
It is also related to the classical theorem of Serret \cite{Ser1869} in 1869 (see \cite{MR3046335}).
It is also called Fischer's formula.

The \emph{Waring rank} $\wrk(f)$ of a homogeneous polynomial $f\in \Bbbk[x_1,\dots,x_n]_d$ over a field $\Bbbk$ is the smallest integer $r$ such that
\[
	f=a_1\ell_1^d+a_2\ell_2^d+\cdots+a_r\ell_r^d
\]
for some $\ell_1,\dots,\ell_r\in \Bbbk[x_1,\dots,x_n]_1$ and $a_1,\dots,a_r\in \Bbbk$.
Such a decomposition is called a \emph{Waring decomposition}.
Throughout this paper, we assume that the field $\Bbbk$ has characteristic zero.

An upper bound on the Waring rank can be obtained by giving a Waring decomposition.
Hence the polarization identity \eqref{eqn:identity} shows that
\[
	\wrk(x_1x_2\cdots x_n)\le 2^{n-1}.
\]
Ranestad and Schreyer \cite{MR2842085} proved $\wrk(x_1x_2\cdots x_n)\ge 2^{n-1}$, which determines the Waring rank as
\[
	\wrk(x_1x_2\cdots x_n)=2^{n-1}.
\]
Waring ranks of monomials over algebraically closed fields of characteristic zero are completely determined by Carlini, Catalisano, and Geramita \cite{MR2966824} as
\[
	\wrk(x_1^{\alpha_1}x_2^{\alpha_2}\cdots x_n^{\alpha_n})=\prod_{i=1}^{n-1} (\alpha_i+1)\quad\text{($\Bbbk=\overline{\Bbbk}$)}
\]
where $\alpha_1\ge\cdots\ge\alpha_n\ge 1$.
Determining Waring ranks of general monomials over non-algebraically-closed fields remains open.
For the study of Waring ranks of monomials over $\r$ and $\q$, see \cite{MR3648510,MR4461573}.

The \emph{border Waring rank} $\bwrk(f)$ of $f$ is the smallest integer $r$ such that $f$ lies in the Zariski closure of the set of $d$-forms of Waring rank at most $r$.
Since $\bwrk(f)\le \wrk(f)$, the polarization identity \eqref{eqn:identity} again gives
\[
	\bwrk(x_1x_2\cdots x_n)\le 2^{n-1}.
\]

It is natural to ask whether the polarization identity \eqref{eqn:identity} remains optimal in this limiting sense.
In contrast to the Waring rank, the border Waring rank of monomials is far from being determined (see \Cref{conj:monomial}).
Landsberg and Teitler \cite{MR2628829} proved
\[
	\binom{n}{\lfloor\frac{n}{2}\rfloor}\le\bwrk(x_1x_2\cdots x_n)\le 2^{n-1}.
\]
The exact value of $\bwrk(x_1x_2\cdots x_n)$ remained unknown after several attempts, and it appears as an open question in the recent survey \cite[Open question 3]{DL25}.

As a symmetric tensor, the monomial $x_1x_2\cdots x_n$ is identified to the $n\times n$ permanent tensor
\[
	\per_n=\sum_{\sigma\in\mathfrak{S}_n}e_{\sigma(1)}\otimes e_{\sigma(2)}\otimes\cdots\otimes e_{\sigma(n)}
\]
where $e_1,\dots,e_n$ denotes the standard basis of the vector space $V=\Bbbk^n$ and $\mathfrak{S}_n$ denotes the symmetric group.
The permanent and the determinant are a classical pair to compare in complexity theory.
In particular, separating the permanent and determinant \emph{polynomials in $n^2$ variables} is the content of Valiant's conjecture \cite{MR564634}.
In this paper, we consider instead $\per_n,\det_n\in V^{\otimes n}$ where
\[
	\det_n=\sum_{\sigma\in\mathfrak{S}_n}\sgn(\sigma)e_{\sigma(1)}\otimes e_{\sigma(2)}\otimes\cdots\otimes e_{\sigma(n)}.
\]
These two are separated by their tensor ranks for every $n\ge 3$ \cite{HJK25}.
However in this paper, the determinant is used as a tool to prove a statement about the permanent instead of being compared.
This converts a non-$\SL(V)$-equivariant problem to an $\SL(V)$-equivariant problem.

\begin{thmalphabetintro}\label{thmintro:main}
	It holds that $\bwrk(x_1x_2\cdots x_n)=2^{n-1}$ over fields of characteristic zero.
	Equivalently, $\bwrk(\per_n)=2^{n-1}$.
\end{thmalphabetintro}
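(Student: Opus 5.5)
The upper bound $\bwrk(x_1\cdots x_n)\le 2^{n-1}$ is already given by the polarization identity \eqref{eqn:identity}, so the whole task is the lower bound $\bwrk(\per_n)\ge 2^{n-1}$. My plan is to find a linear map $\Phi$, defined on $V^{\otimes n}$ or on $S^nV$, with three properties. First, $\Phi$ sends every $n$-th power $\ell^{\otimes n}$ to an operator of rank at most some number $c$. Second, $\Phi(\per_n)$ has rank at least $c\cdot 2^{n-1}$. Third, rank being lower semicontinuous, the bound survives limits. As the abstract suggests, I would not use only symmetric flattenings. Catalecticants give exactly the Landsberg--Teitler bound $\binom{n}{\lfloor n/2\rfloor}$, and that bound is too weak.

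The key idea is to relate $\per_n$ to $\det_n$. Write $x_1\cdots x_n$ and consider a torus rescaling. Take a border decomposition $\per_n=\lim_{t\to0}\sum_{i=1}^r \ell_i(t)^{\otimes n}$ with $r=\bwrk(\per_n)$. I would try to produce from it a border decomposition of $\det_n$ whose summands are structured tensors, namely images of $\ell^{\otimes n}$ under maps that insert signs. An alternative is to apply a Koszul-type operator directly to each summand. For each power $\ell^{\otimes n}$, the higher-order Koszul flattening
\[
\Lambda^p V\otimes V^{\otimes(\cdot)} \longrightarrow \Lambda^{p+1}V\otimes\cdots
\]
wedges the factors with $\ell$. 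Its rank on $\ell^{\otimes n}$ is small, $\binom{n-1}{p}$ for the single-wedge version, and wedging with $\ell$ several times kills anything beyond the first wedge. Applying the same flattening to $\per_n$ after the sign twist, the wedge products antisymmetrize the permanent into the determinant. The relevant operator then becomes the $\SL(V)$-equivariant Koszul flattening of $\det_n$, and its rank can be computed with Schur's lemma and Pieri rules. So the concrete steps are as follows.
\begin{enumerate}[label=(\arabic*)]
\item Fix a higher-order Koszul flattening $K$ that uses wedges in all $n$ tensor factors, i.e.\ a map $V^{\otimes n}\to \Hom(A,B)$ with $A,B$ built from $\Lambda^\bullet V$.
\item Show that $\operatorname{rank} K(\ell^{\otimes n})\le c$ uniformly in $\ell$, using the fact that $\ell\wedge\ell=0$.
\item Show that $K(\per_n)$ is, up to sign changes of basis, equal to $K'(\det_n)$ for an $\SL(V)$-equivariant $K'$.
\item Decompose $A$ and $B$ into $\GL(V)$-isotypic pieces and compute the rank of $K'(\det_n)$ as a sum of dimensions of irreducible modules that are not killed. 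The target is $c\cdot 2^{n-1}$.
\end{enumerate}
For the natural choice I expect $c=1$, or $c$ a binomial coefficient with total ratio $2^{n-1}=\sum_k\binom{n-1}{k}$. This suggests taking $A=\bigoplus_p \Lambda^p V$, where summing over exterior degrees produces the $2^{n-1}$.

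The main obstacle is step (3): converting the non-equivariant permanent into the equivariant determinant. The monomial $x_1\cdots x_n$ is only torus-invariant, while $\det_n$ is $\SL(V)$-invariant. The trick has to exploit the fact that the Koszul signs in the $k$-th factor can be absorbed by an appropriate choice of identification. I would first try to check this directly on small $n$ ($n=2,3$) to find the right flattening. Once step (3) holds, step (4) reduces to showing that $K'(\det_n)$ restricted to each isotypic component is nonzero, hence an isomorphism by Schur's lemma. That nonvanishing would be verified on a highest weight vector.
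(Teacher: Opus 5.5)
Your outline has the right surrounding pieces: a higher-order Koszul flattening in all $n$ factors, Schur's lemma with a highest weight vector for $\det_n$, and $\ell\wedge\ell=0$ for the powers. But step (3), which carries all the content, is not merely unresolved. As stated it is false. A change of basis by signs preserves rank, yet the natural flattenings of $\per_n$ and $\det_n$ have different ranks. For $n=3$, $(\det_3)^{\wedge(0,1,2)}$ is a full-rank $9\times 9$ matrix. On the other hand, the higher-order Koszul bound has denominator $\binom{2}{0}\binom{2}{1}\binom{2}{2}=2$, and together with $\brk(\per_3)=4$ it forces $\rank(\per_3)^{\wedge(0,1,2)}\le 8$. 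For $n=4$, full rank of the $96\times 96$ matrix $(\per_4)^{\wedge(0,1,2,3)}$ would give $\brk(\per_4)\ge 96/9>8$. The signs $\sgn(\sigma)$ do not factor as a row sign times a column sign. Nor do the wedges ``antisymmetrize'' $\per_n$ into $\det_n$: the flattening is linear in the tensor, and $\per_n-\det_n$ has nonzero image. The small-case check you planned would expose this already at $n=3$. Your alternative, transporting a border decomposition of $\per_n$ to one of $\det_n$ via sign-inserting maps, is never made concrete, so it does not fill the gap either.

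The missing idea is that the signs disappear only modulo $2$: $\per_n\equiv\det_n\pmod 2$ coefficientwise. To exploit this you need two things. First, a square sub-flattening on which $\det_n$ acts by an \emph{odd} integer scalar. Second, an integral lattice preserved by the flattening of every integer tensor.

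The paper gets both as follows. It restricts $(\cdot)^{\wedge(0,1,\dots,n-1)}$ to $\s_\ld V$ with $\ld=(n-1,\dots,1)$, projects by $c_\lt$ with $\lt=(n,\dots,1)$, and identifies $\s_\lt V\cong\s_\ld V$ via $v\mapsto(e_1\wedge\cdots\wedge e_n)\otimes v$. By Schur's lemma and evaluation at the highest weight vector, the resulting endomorphism is $\Psi(\det_n)=\pm\prod_{i=1}^n(2i-1)^{n+1-i}\,\Id$. This scalar is the product of the hook lengths of the staircase $\lt$, all of which are odd. Moreover, $\Psi(z)$ preserves $\z\langle e_T\cdot c_\ld\rangle$ for every integer tensor $z$. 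Hence $\det M_{\per_n}\equiv\det M_{\det_n}\equiv 1\pmod 2$, and $\Psi(\per_n)$ is invertible.

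The restriction to $\s_\ld V$ is essential. For the full flattening the mod $2$ transfer cannot work: $(\per_3)^{\wedge(0,1,2)}$ is singular, so the determinant of $(\det_3)^{\wedge(0,1,2)}$ must be even.

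Finally, on $\Sym^n V$ the composite is a nonzero multiple of the Young flattening $\mathcal{F}_{\ld,\lt}$. So the denominator is $\rank\mathcal{F}_{\ld,\lt}(x_1^n)\le 2^{\binom{n-1}{2}}$, and the bound is $2^{\binom{n}{2}}/2^{\binom{n-1}{2}}=2^{n-1}$. It does not come from $A=\bigoplus_p\bigwedge^pV$ with $c=1$ and $2^{n-1}=\sum_k\binom{n-1}{k}$, as you guessed.
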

As a consequence, the polarization identity \eqref{eqn:identity} is an optimal Waring decomposition even if we allow limits.

It is believed that the border Waring rank of the monomial $x_1^{\alpha_1}x_2^{\alpha_2}\cdots x_n^{\alpha_n}$ attains Landsberg--Teitler's upper bound $\prod_{i=2}^n (1+\alpha_i)$ (see \cite[Section 6.1]{MR4332674}).
This conjecture had already circulated in the community for almost a decade by 2019 \cite{CGO19}.

\begin{conjecture}[{\cite[Conjecture 1.1]{Oed16}}]\label{conj:monomial}
For $\alpha_1\ge\alpha_2\ge\cdots\ge \alpha_n\ge 1$, it holds that
\[
	\bwrk(x_1^{\alpha_1}x_2^{\alpha_2}\cdots x_n^{\alpha_n})=\prod_{i=2}^n (1+\alpha_i).
\]
\end{conjecture}

The conjecture was proved for monomials with three variables by Buczy{\'n}ska and Buczy{\'n}ski \cite{MR4332674}, and for monomials satisfying
\begin{equation}\label{eqn:MV}
	\alpha_1\ge \alpha_2+\alpha_3+\cdots+\alpha_n-2
\end{equation}
by Ma{\'n}dziuk and Ventura \cite{MR5014850}.
For the squarefree monomial $x_1x_2\cdots x_n$, the condition \eqref{eqn:MV} becomes $n\le 4$.

Previously, the border Waring rank of $x_1x_2\cdots x_n$ was determined up to $n=5$.
The proof of the case $n=3$ can be found in Ottaviani's study \cite{MR2502909}:
\[
	\bwrk(x_1x_2x_3)=4.
\]
Guan \cite{MR3627645} proved $\bwrk(x_1\cdots x_4)\ge 7$, $\bwrk(x_1\cdots x_5)\ge 14$, and
\[
	\bwrk(x_1x_2\cdots x_{2k+1})\ge \binom{2k+1}{k}\left(1+\frac{k^2}{(k+1)^2	(2k-1)}\right),
\]
which gives the previous best known bounds for odd $n\ge 9$.
The exact border Waring ranks of $x_1\cdots x_4$ and $x_1\cdots x_5$ are determined by Buczy{\'n}ska and Buczy{\'n}ski \cite[Example 6.6]{MR4332674} as
\[
	\bwrk(x_1\cdots x_4)=8\text{ and }\bwrk(x_1\cdots x_5)=16.
\]
It is known that $\brk(x_1\cdots x_6)\ge 29$ and $\brk(x_1\cdots x_7)\ge 55$ \cite{HJK25}.
As a consequence, we get
\[
	\bwrk(x_1\cdots x_6)\ge 29\text{ and }\bwrk(x_1\cdots x_7)\ge 55.
\]
It is also known that $\brk(x_1\cdots x_n)\ge \binom{n}{\lfloor\frac{n}{2}\rfloor}+1$ over $\c$ and its subfields \cite{HS26}.
Consequently, it holds that
\[
	\bwrk(x_1\cdots x_n)\ge \binom{n}{\lfloor\frac{n}{2}\rfloor}+1.
\]
The following table summarizes these previous results.

\begin{table}[H]
	\resizebox{\textwidth}{!}{%
	\begin{tabular}{|c|cccccccccc|}
		\hline
		$n$ & $3$ & $4$ & $5$ & $6$ & $7$ & $8$ & $9$ & $10$ & $11$ & $\cdots$ \\
		\hline
		upper bound & $4$ & $8$ & $16$ & $32$ & $64$ & $128$ & $256$ & $512$ & $1024$ & $\cdots$ \\
		lower bound & $4$ & $8$ & $16$ & $29$ & $55$ & $71$ & $138$ & $253$ & $498$ & $\cdots$\\
		reference for lower bound& \cite{MR2502909} & \cite{MR4332674} & \cite{MR4332674} & \cite{HJK25} & \cite{HJK25} & \cite{HS26} & \cite{MR3627645} & \cite{HS26} & \cite{MR3627645} & $\cdots$ \\ 
		\hline
	\end{tabular}
	}%
	\caption{Previous best known bounds on $\bwrk(x_1x_2\cdots x_n)$ over $\c$ and its subfields}%
	\label{table:previous}%
\end{table}%

\section*{Acknowledgement}
The author thanks Jaros{\l}aw Buczy{\'n}ski, Fulvio Gesmundo, Kangjin Han, J. M. Landsberg, Luke Oeding, and Alessandro Oneto for their interest in this work.
The author was supported by a KIAS Individual Grant (MG101401) at Korea Institute for Advanced Study.

\section{Representation theory and applications to ranks}\label{sec:representation_theory}

In this section, we recall some basic facts in representation theory and methods giving lower bounds for ranks.
We generally follow \cite{MR1153249} for the representation theory while some conventions might be different.

We suppose $V$ is a vector space of dimension $n$ over an algebraically closed field $\Bbbk$ of characteristic zero with the basis $\{e_1,\dots,e_n\}$.
Although our main result is stated for arbitrary fields of characteristic zero, it is enough to prove it for algebraically closed fields.
We denote by $\Sym^d V$ the space of the homogeneous polynomials of degree $d$ in variables $x_1,\dots,x_n$ where $x_i:=e_i$.
We also identify $\Sym^d V\cong\s_{(d)}V$ via $v_1\cdots v_d\mapsto (v_1\otimes \cdots\otimes v_d)\cdot c_{(d)}$ and $\bigwedge^d V\cong\s_{(1^d)}V$ via $v_1\wedge\cdots\wedge v_d\mapsto (v_1\otimes \cdots\otimes v_d)\cdot c_{(1^d)}$, defined in \Cref{subsec:Schur_module}.
Here, the $v_i$'s are the elements of $V$.

\subsection{Young diagram and Young tableau}
A \emph{partition} $\ld$ of a positive integer $d$ is a sequence $(\ld_1,\dots,\ld_k)$ of positive integers such that $\ld_1\ge \cdots\ge \ld_k$ and $\sum_{i=1}^k \ld_i=d$.
We denote by $\ld\vdash d$ when $\ld$ is a partition of $d$.
The \emph{length} $\ell(\ld)$ of $\ld$ is defined to be $k$ in this case.
The partition with $\lambda_1=\cdots=\lambda_k=a$ is denoted by $(a^k)$.

Given a partition $\ld=(\ld_1,\dots,\ld_k)$, the \emph{Young diagram} associated to $\ld$ is the diagram consisting of left-aligned boxes such that the $i$-th row has $\ld_i$ boxes.
We also write $\ld$ for the associated Young diagram and often identify these two.
The \emph{conjugate partition} $\ld'$ to $\ld$ is the partition corresponding to the transpose of the Young diagram associated to $\ld$.

For a Young diagram $\ld$ and a positive integer $m$, a \emph{Young tableau} on $\ld$ with entries in $\{1,\dots,m\}$ is an assignment of a number in $\{1,\dots,m\}$ to each box of $\ld$.
In this case, we say that such a Young tableau $T$ has the \emph{shape} $\ld$.
A \emph{semistandard Young tableau} (SSYT) is a Young tableau such that the numbers in each row are nondecreasing and the numbers in each column are strictly increasing.

\subsection{Young symmetrizer and Schur module}\label{subsec:Schur_module}
Suppose $\ld\vdash d$ and consider a Young tableau $T$ of shape $\ld$ in which each of $1,\dots,d$ appears exactly once.
We define two subgroups of the symmetric group $\mathfrak{S}_d$ as follows.
\begin{align*}
	P_T &= \{\sigma\in\mathfrak{S}_d\mid \sigma\text{ preserves the set of numbers in each row}\} \\
	Q_T &= \{\sigma\in\mathfrak{S}_d\mid \sigma\text{ preserves the set of numbers in each column}\}.
\end{align*}
The \emph{group algebra} $\Bbbk\mathfrak{S}_d$ is a $\Bbbk$-algebra whose underlying vector space has basis $\{e_\sigma\mid \sigma\in\mathfrak{S}_d\}$.
Its algebra structure is given by $e_\sigma\cdot e_\tau = e_{\sigma\tau}$.
Then we get the induced elements in $\Bbbk\mathfrak{S}_d$ as follows.
\[
	a_T := \sum_{\sigma\in P_T} e_\sigma\text{ and }b_T := \sum_{\sigma\in Q_T} \sgn(\sigma)e_\sigma
\]

The \emph{Young symmetrizer} $c_T$ associated to $T$ is defined as $c_T = a_T\cdot b_T\in \Bbbk\mathfrak{S}_d$.
The right action $(v_1\otimes v_2\otimes \cdots\otimes v_d)\cdot \sigma := v_{\sigma(1)}\otimes v_{\sigma(2)}\otimes \cdots\otimes v_{\sigma(d)}$ of $\mathfrak{S}_d$ on $V^{\otimes d}$ induces a right action of $\Bbbk\mathfrak{S}_d$ on $V^{\otimes d}$.
The \emph{Schur module} associated to $T$ is defined by $\s_TV=\left(V^{\otimes d}\right)\cdot c_T$.
Let $T_\ld$ be the Young tableau of shape $\ld$ filled with $1,\dots,d$ in the reading order as in the following example.
\[
	T_\ld=\ytableaushort{1234,567,89}
\]
Then we define $a_\ld:=a_{T_\ld}$, $b_\ld:=b_{T_\ld}$, $c_\ld:=c_{T_\ld}$, and $\s_\ld V:=(V^{\otimes d})\cdot c_\ld=\s_{T_\ld} V$.

The Schur module $\s_\ld$ is an irreducible $\GL(V)$-representation whenever $\ell(\ld)\le \dim V$.
As $(V^{\otimes d})\cdot c_\ld=\left((V^{\otimes d})\cdot a_\ld\right)\cdot b_\ld\subseteq\Im(b_\ld)$, the Schur module $\s_\ld V$ can be considered as a subrepresentation of	$W_{\ld}:=\bigwedge^{\ld'_1}V\otimes \bigwedge^{\ld'_2}V\otimes\cdots\otimes \bigwedge^{\ld'_{k'}}V$ by identifying $(v_1\otimes\cdots\otimes v_d)\cdot b_\ld$ with the tensor product of wedge products of the $v_i$'s corresponding to each column.

\begin{notation}\label{not:e_T}
	For a given Young tableau $T$ of shape $\ld=(\ld_1,\dots,\ld_k)$ with entries in $\{1,\dots,n\}$, we denote
\[
	e_T:=(e_{T_{1,1}}\otimes e_{T_{1,2}}\otimes\cdots\otimes e_{T_{1,\ld_1}})\otimes (e_{T_{2,1}}\otimes \cdots\otimes e_{T_{2,\ld_2}})\otimes\cdots\otimes (e_{T_{k,1}}\otimes\cdots\otimes e_{T_{k,\ld_k}})
\]
where $T_{i,j}$ is the entry of $T$ in the $i$-th row and $j$-th column.
\end{notation}

\begin{example}
	For the Young tableau
\[
	T=\ytableaushort{1336,24},
\]
we get $e_T=e_1\otimes e_3\otimes e_3\otimes e_6\otimes e_2\otimes e_4\in V^{\otimes 6}$.
\end{example}
As $T$ runs over Young tableaux of shape $\ld=(\ld_1,\dots,\ld_k)$ with entries in $\{1,\dots,n\}$, the elements $e_T$ form the standard basis of $V^{\otimes d}$.
The Schur module $\s_\ld V$ has the basis \[\{e_S\cdot c_\ld\mid S\text{ is an SSYT of shape }\ld\text{ with entries in }\{1,\dots,n\}\}.\]
Hence the dimension of $\s_\ld V$ equals the number of SSYTs of shape $\ld$ with entries in $\{1,2,\dots,n\}$.
It also holds that
\[
	\dim \s_\ld V=\prod_{1\le i<j\le n}\frac{\ld_i-\ld_j+j-i}{j-i}
\]
where we regard $\ld_i=0$ for $i>\ell(\ld)$.

\subsection{Pieri map and Young flattening}\label{subsec:Young_flattening}

Schur's lemma is one of the tools that we use in the proof repeatedly.
\begin{lemma}[Schur's lemma]
	Let $U$ and $W$ be irreducible $G$-representations.
	Then a $G$-equivariant map $\Phi:U\to W$ is either an isomorphism or the zero map.
	Furthermore, if $W=U$, then $\Phi$ is a scalar multiple of $\Id_U$ where $\Id_U:U\to U$ is the identity map.
\end{lemma}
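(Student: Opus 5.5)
The plan is the standard kernel argument for $G$-equivariant maps between irreducible representations. Let $\Phi:U\to W$ be $G$-equivariant.

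\textbf{Kernel and image are subrepresentations.} Equivariance means $\Phi(g\cdot u)=g\cdot\Phi(u)$ for all $g\in G$ and $u\in U$. Two consequences follow.
\begin{itemize}
\item If $\Phi(u)=0$, then $\Phi(g\cdot u)=g\cdot 0=0$, so $\ker\Phi$ is a $G$-stable subspace of $U$.
\item Since $g\cdot\Phi(u)=\Phi(g\cdot u)$, the image $\Phi(U)$ is a $G$-stable subspace of $W$.
\end{itemize}

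\textbf{First assertion.} By irreducibility, $\ker\Phi$ is either $0$ or $U$, and $\Phi(U)$ is either $0$ or $W$.
\begin{itemize}
\item If $\ker\Phi=U$, then $\Phi$ is the zero map.
\item Otherwise $\ker\Phi=0$, so $\Phi$ is injective. Then $\Phi(U)\neq0$, since $U\neq0$ as an irreducible representation. Hence $\Phi(U)=W$, and $\Phi$ is a bijective linear equivariant map, i.e. an isomorphism.
\end{itemize}

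\textbf{Second assertion.} Here $W=U$, and we use two hypotheses: $U$ is finite-dimensional, and $\Bbbk$ is algebraically closed. Both hold in our setting, since we work with Schur modules over an algebraically closed $\Bbbk$.
\begin{itemize}
\item Since $\dim U<\infty$ and $\Bbbk=\overline{\Bbbk}$, the characteristic polynomial of $\Phi$ has a root, so $\Phi$ has an eigenvalue $c\in\Bbbk$.
\item The map $\Phi-c\,\Id_U$ is again $G$-equivariant, being a difference of equivariant maps.
\item It has a nonzero kernel (the $c$-eigenspace), so it is not an isomorphism.
\item By the first part it must be zero, giving $\Phi=c\,\Id_U$.
\end{itemize}

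The only point needing care is this last step: the existence of an eigenvalue is exactly where algebraic closedness and finite-dimensionality are used. Over a non-closed field the second statement can fail.
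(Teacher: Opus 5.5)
Your argument is correct and is the standard textbook proof; the paper gives no proof of its own and simply recalls Schur's lemma as a classical fact. You are also right that the scalar statement needs $\dim U<\infty$ and $\Bbbk=\overline{\Bbbk}$. The paper's statement leaves these implicit, but they hold in its setting: Section~2 assumes $\Bbbk$ algebraically closed, and the Schur modules $\s_\ld V$ are finite-dimensional.
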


Pieri's formula is a $\GL(V)$-equivariant isomorphism in the below.
\[
	\s_{(d)}V\otimes \s_\ld V \cong \bigoplus_{\mu} \s_\mu V
\]
Here, the direct sum is taken over all Young diagrams $\mu$ obtained by adding $d$ boxes to $\ld$, with no two in the same column.
Since there is the unique way to add boxes to obtain such a shape $\mu$, the multiplicity of each $\s_\mu V$ on the right hand side is one.
By composing with the projection to $\s_\mu V$, we get the Pieri map \[\s_{(d)}V\otimes \s_\ld V \to\s_\mu V\] which is $\GL(V)$-equivariant and unique up to scalar by Schur's lemma.
We denote the induced $\GL(V)$-map by
\[
	\mathcal{F}_{\ld,\mu}:\s_{(d)}V\to\Hom_\Bbbk(\s_\ld V,\s_\mu V).
\]
Recall that we identify $\s_{(d)}V=\Sym^dV$. Hence for a homogeneous polynomial $f\in \Sym^dV$, we get the linear map
\[
	\mathcal{F}_{\ld,\mu}(f):\s_\ld V \to\s_\mu V.
\]
Note that $\mathcal{F}_{\ld,\mu}(f)$ need not be $\GL(V)$-equivariant or $\SL(V)$-equivariant while $\mathcal{F}_{\ld,\mu}$ is.

Landsberg and Ottaviani \cite{MR3081636} introduced a rank method called \emph{Young flattening}.
\begin{theorem}[{\cite[Proposition 4.1.1]{MR3081636}}]\label{thm:Young_flat}
	Under the above notations, it holds that
	\[
		\bwrk(f)\ge \dfrac{\rank\mathcal{F}_{\ld,\mu}(f)}{\rank\mathcal{F}_{\ld,\mu}(x_1^d)}.
	\]
\end{theorem}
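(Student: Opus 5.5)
The plan is to use that $\mathcal{F}_{\ld,\mu}$ is linear and $\GL(V)$-equivariant, that rank is subadditive, and that bounded rank is a Zariski-closed condition. Set $\rho:=\rank\mathcal{F}_{\ld,\mu}(x_1^d)$. We may assume $\rho>0$; otherwise $\mathcal{F}_{\ld,\mu}$ vanishes on the spanning set $\{\ell^d\}$ of $\Sym^dV$, hence identically, and the bound is vacuous.

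\emph{Step 1: every $d$-th power has flattening rank $\rho$.} Let $\ell\in V$ be nonzero. Since $\GL(V)$ acts transitively on $V\setminus\{0\}$, choose $g\in\GL(V)$ with $g\cdot x_1=\ell$, so that $g\cdot x_1^d=\ell^d$ in $\Sym^dV$. Equivariance of $\mathcal{F}_{\ld,\mu}$ means
\[
\mathcal{F}_{\ld,\mu}(g\cdot f)=\rho_\mu(g)\circ\mathcal{F}_{\ld,\mu}(f)\circ\rho_\ld(g)^{-1},
\]
where $\rho_\ld$ and $\rho_\mu$ are the actions on $\s_\ld V$ and $\s_\mu V$. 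Composing with invertible maps preserves rank, so $\rank\mathcal{F}_{\ld,\mu}(\ell^d)=\rho$. For $\ell=0$ the rank is $0$.

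\emph{Step 2: the bound for actual Waring decompositions.} If $f=\sum_{i=1}^r a_i\ell_i^d$, then by linearity
\[
\mathcal{F}_{\ld,\mu}(f)=\sum_{i=1}^r a_i\,\mathcal{F}_{\ld,\mu}(\ell_i^d).
\]
Subadditivity of rank and Step 1 then give $\rank\mathcal{F}_{\ld,\mu}(f)\le r\rho$ for every $f$ of Waring rank at most $r$.

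\emph{Step 3: passing to limits.} The set $Z_r:=\{f\in\Sym^dV:\rank\mathcal{F}_{\ld,\mu}(f)\le r\rho\}$ is Zariski closed. Indeed, the matrix entries of $\mathcal{F}_{\ld,\mu}(f)$ are linear in the coefficients of $f$, so $Z_r$ is cut out by the vanishing of all $(r\rho+1)$-minors. By Step 2, $Z_r$ contains all forms of Waring rank at most $r$, hence also their Zariski closure. Taking $r=\bwrk(f)$ gives $\rank\mathcal{F}_{\ld,\mu}(f)\le\bwrk(f)\cdot\rho$, which is the claim.

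No step is a real obstacle. The only point needing care is the lower semicontinuity of rank in Step 3, which is exactly what lets the bound pass from Waring rank to border Waring rank. Step 1 is where equivariance is essential, and it is why the denominator is computed at $x_1^d$ only.
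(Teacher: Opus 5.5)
Your proof is correct and complete, and it is the standard argument behind the cited result \cite[Proposition 4.1.1]{MR3081636}, which the paper quotes without proof. $\GL(V)$-equivariance makes $\rank\mathcal{F}_{\ld,\mu}(\ell^d)$ independent of $\ell\neq 0$, subadditivity bounds $\rank\mathcal{F}_{\ld,\mu}$ by $r\rho$ on forms of Waring rank at most $r$, and this rank condition is Zariski closed (vanishing of minors), so it passes to the closure.

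One small remark: your degenerate case $\rho=0$ never actually occurs, because the Pieri map is a nonzero projection and the powers $\ell^d$ span $\Sym^dV$.
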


\subsection{Higher-order Koszul flattening}
The \emph{tensor rank} $\rk(T)$ of a tensor $T\in V_1\otimes\cdots\otimes V_n$ is the smallest integer $r$ such that $T$ can be written as a sum of $r$ simple tensors.
The \emph{border tensor rank} $\brk(T)$ is the smallest integer $r$ such that $T$ lies in the Zariski closure of the set of tensors of rank at most $r$.
For a symmetric tensor $f\in \Sym^d V$, it holds that $\rk(f)\le \wrk(f)$ and $\brk(f)\le \bwrk(f)$.

Hauenstein, Oeding, Ottaviani, and Sommese \cite{MR3987862} proposed to apply the Koszul flattening, introduced by Landsberg and Ottaviani \cite{MR3376667}, simultaneously to higher-order tensors.
\begin{theorem}[{\cite[Theorem 2.1]{MR3376667},\cite[Section 5.1]{MR3987862}}]\label{thm:higher_Koszul}
	Let
	\[
		T=\sum_{i=1}^kv_{1,i}\otimes\cdots\otimes v_{n,i}\in V_1\otimes\cdots\otimes V_n
	\]
	and $\dim V_i=d_i$.
	Fix nonnegative integers $p_1,\dots,p_n$.
	Define
	\[
		T^{\wedge(p_1,\dots,p_n)}_{(V_1,\dots,V_n)}:\bigwedge^{p_1}V_1\otimes\bigwedge^{p_2}V_2\otimes\cdots\otimes\bigwedge^{p_n}V_n\to\bigwedge^{p_1+1}V_1\otimes\bigwedge^{p_2+1}V_2\otimes\cdots\otimes\bigwedge^{p_n+1}V_n
	\]
	by
	{\scriptsize
	\begin{align*}
		&(a_{1,1}\wedge\cdots\wedge a_{1,p_1})\otimes (a_{2,1}\wedge\cdots\wedge a_{2,p_2})\otimes\cdots\otimes (a_{n,1}\wedge\cdots\wedge a_{n,p_n})\\
		&\mapsto\sum_{i=1}^{k}(v_{1,i}\wedge a_{1,1}\wedge\cdots\wedge a_{1,p_1})\otimes (v_{2,i}\wedge a_{2,1}\wedge\cdots\wedge a_{2,p_2})\otimes\cdots\otimes (v_{n,i}\wedge a_{n,1}\wedge\cdots\wedge a_{n,p_n})
	\end{align*}
	}%
	and extending linearly.
	Then
	\[
		\brk(T)\ge \frac{\rank\left(T^{\wedge(p_1,\dots,p_n)}_{(V_1,\dots,V_n)}\right)}{\binom{d_1-1}{p_1}\cdots\binom{d_n-1}{p_n}}.
	\]
\end{theorem}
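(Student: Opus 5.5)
The plan is to derive the bound from three facts: the construction $T\mapsto T^{\wedge(p_1,\dots,p_n)}_{(V_1,\dots,V_n)}$ is linear in $T$, the flattening of a rank-one tensor has a rank we can compute exactly, and rank conditions on matrices are Zariski closed. Throughout, write $\Phi(T):=T^{\wedge(p_1,\dots,p_n)}_{(V_1,\dots,V_n)}$.

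First I will check that $\Phi$ is well defined and linear. The defining formula is multilinear in $(v_{1,i},\dots,v_{n,i})$ and additive over $i$, so it factors through $V_1\otimes\cdots\otimes V_n$. Hence $\Phi(T)$ does not depend on the chosen decomposition of $T$, and $\Phi:V_1\otimes\cdots\otimes V_n\to\Hom_\Bbbk\bigl(\bigotimes_j\bigwedge^{p_j}V_j,\bigotimes_j\bigwedge^{p_j+1}V_j\bigr)$ is linear. In particular, for $T=\sum_{i=1}^k v_{1,i}\otimes\cdots\otimes v_{n,i}$ we have $\Phi(T)=\sum_i\Phi(v_{1,i}\otimes\cdots\otimes v_{n,i})$.

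Next I compute the rank of $\Phi$ on a nonzero simple tensor $v_1\otimes\cdots\otimes v_n$. In that case $\Phi(v_1\otimes\cdots\otimes v_n)=\phi_1\otimes\cdots\otimes\phi_n$, where $\phi_j:\bigwedge^{p_j}V_j\to\bigwedge^{p_j+1}V_j$ is $a\mapsto v_j\wedge a$. Extend $v_j$ to a basis of $V_j$ and take $v_j$ as the first basis vector.
\begin{itemize}
\item The kernel of $\phi_j$ is $v_j\wedge\bigwedge^{p_j-1}V_j$, which has dimension $\binom{d_j-1}{p_j-1}$.
\item Therefore $\rank\phi_j=\binom{d_j}{p_j}-\binom{d_j-1}{p_j-1}=\binom{d_j-1}{p_j}$.
\end{itemize}
Since the rank of a tensor product of linear maps is the product of their ranks, $\rank\Phi(v_1\otimes\cdots\otimes v_n)=\prod_j\binom{d_j-1}{p_j}=:N$. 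By linearity and subadditivity of matrix rank, every $T$ with $\rk(T)\le r$ satisfies $\rank\Phi(T)\le rN$.

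Finally I pass to border rank. The set $\{T:\rank\Phi(T)\le rN\}$ is Zariski closed, because it is cut out by the $(rN+1)$-minors of $\Phi(T)$, and these minors are polynomials in the coordinates of $T$ since $\Phi$ is linear. This set contains all tensors of rank at most $r$, so it also contains their Zariski closure. Taking $r=\brk(T)$ gives $\rank\Phi(T)\le\brk(T)\,N$, which is the claimed inequality.

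The only step needing real care is the rank computation for $\phi_j$, namely identifying its kernel exactly. The rest is formal.
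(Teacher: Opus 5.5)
Your proposal is correct and follows the standard argument for this bound: linearity of $T\mapsto T^{\wedge(p_1,\dots,p_n)}_{(V_1,\dots,V_n)}$, the value $\prod_j\binom{d_j-1}{p_j}$ for the rank on a simple tensor (from $\rank(\phi_1\otimes\cdots\otimes\phi_n)=\prod_j\rank\phi_j$), and Zariski-closedness of the minor conditions; the paper gives no proof of its own and simply quotes the result from Landsberg--Ottaviani and Hauenstein--Oeding--Ottaviani--Sommese. The only implicit hypothesis, which the statement also leaves tacit, is $p_j\le d_j-1$ for every $j$, so that the denominator $\prod_j\binom{d_j-1}{p_j}$ is nonzero.
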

Note that both $(\det_n)^{\wedge(0,1,\dots,n-2,n-1)}$ and $(\per_n)^{\wedge(0,1,\dots,n-2,n-1)}$ correspond to square matrices.
It is known that $(\det_n)^{\wedge(0,1,\dots,n-2,n-1)}$ has full rank \cite{HJK25}.
However, $(\per_n)^{\wedge(0,1,\dots,n-2,n-1)}$ need not have full rank in general.

\section{Border Waring ranks of monomials}\label{sec:bwrk}

\subsection{Landsberg--Teitler's bounds}
For $\alpha=(\alpha_1,\alpha_2,\dots,\alpha_n)$ and $\delta\ge 0$, denote by $S_{\alpha,\delta}$ the number of distinct $n$-tuples $(\beta_1,\dots,\beta_n)$ such that $\sum_{i=1}^n \beta_i=\delta$ and $0\le\beta_i\le\alpha_i$.
Landsberg and Teitler established bounds on border Waring ranks of monomials.
\begin{theorem}[{\cite[Theorem 11.2]{MR2628829}}]\label{thm:LTbound}
	Let $\alpha_1\ge\alpha_2\ge\cdots\ge\alpha_n\ge 1$ and $d=\sum_{i=1}^n \alpha_i$.
	Then it holds that
	\[
		S_{\alpha,\lfloor \frac{d}{2}\rfloor}\le\bwrk(x_1^{\alpha_1}x_2^{\alpha_2}\cdots x_n^{\alpha_n})\le \prod_{i=2}^n (1+\alpha_i).
	\]
\end{theorem}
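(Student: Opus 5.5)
The two inequalities are proved separately: the lower bound comes from a flattening (catalecticant) rank computation, and the upper bound from an explicit degenerating family of Waring decompositions. Write $f=x_1^{\alpha_1}\cdots x_n^{\alpha_n}$ and $\delta=\lfloor d/2\rfloor$.

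\emph{Lower bound.} I would use the catalecticant map
\[
C_\delta(f):\Sym^{\delta}V^*\to\Sym^{d-\delta}V,
\]
which sends a differential operator of order $\delta$ to the corresponding partial derivative of $f$. It is linear in $f$, so its rank is lower semicontinuous in $f$ and is subadditive under sums. Moreover $\rank C_\delta(\ell^d)=1$ for every linear form $\ell$, since every $\delta$-th partial of $\ell^d$ is a multiple of $\ell^{d-\delta}$.

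Hence if $f$ is a limit of sums of $r$ powers of linear forms, then $\rank C_\delta(f)\le r$. This is the special case of \Cref{thm:Young_flat} with $\ld=\emptyset$, after identifying $\Sym^\delta V^*$ with $\Sym^\delta V$.

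It remains to compute $\rank C_\delta(f)$. The monomial operator $\partial^\beta$ with $|\beta|=\delta$ sends $f$ to a nonzero multiple of $x^{\alpha-\beta}$ when $0\le\beta_i\le\alpha_i$ for all $i$, and to $0$ otherwise. Distinct admissible $\beta$ give distinct monomials $x^{\alpha-\beta}$. Therefore the image has dimension exactly $S_{\alpha,\delta}$, which gives the lower bound.

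\emph{Upper bound.} Put $\alpha'=(\alpha_2,\dots,\alpha_n)$ and $m=|\alpha'|$. For each $i\ge 2$ choose a set $A_i\subset\Bbbk$ of $\alpha_i+1$ distinct scalars. By invertibility of the Vandermonde matrix, choose weights $c^{(i)}_a$ for $a\in A_i$ such that
\[
\sum_{a\in A_i}c^{(i)}_a a^{k}=0 \quad\text{for } 0\le k<\alpha_i,
\qquad
\sum_{a\in A_i}c^{(i)}_a a^{\alpha_i}=1 .
\]
For each point $\lambda=(\lambda_2,\dots,\lambda_n)$ of the grid $A_2\times\cdots\times A_n$, set $c_\lambda=\prod_{i\ge2}c^{(i)}_{\lambda_i}$, and consider
\[
F_t=\sum_\lambda c_\lambda\Bigl(x_1+t\sum_{i\ge2}\lambda_ix_i\Bigr)^d .
\]
Expanding by the multinomial theorem, the coefficient of $t^k\,x_1^{d-k}x^\gamma$ (with $|\gamma|=k$) is a multinomial coefficient times
\[
\sum_\lambda c_\lambda\lambda^\gamma=\prod_{i\ge2}\Bigl(\sum_{a\in A_i}c^{(i)}_a a^{\gamma_i}\Bigr).
\]
This product vanishes as soon as some $\gamma_i<\alpha_i$. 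If $|\gamma|\le m$ and $\gamma\ne\alpha'$, then some $\gamma_i<\alpha_i$, so all terms of $t$-degree below $m$ cancel. In degree $m$, only $\gamma=\alpha'$ survives, with nonzero coefficient. Thus
\[
t^{-m}F_t=\binom{d}{\alpha_1,\dots,\alpha_n}\,f+O(t).
\]
Each $t^{-m}F_t$ is a sum of $\prod_{i\ge2}(1+\alpha_i)$ powers, so $f$ lies in the closure of forms of Waring rank at most $\prod_{i\ge2}(1+\alpha_i)$. Here $x_1^{\alpha_1}$ plays the role of the large variable and costs nothing, which is why one drops the largest exponent.

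The step needing most care is this tensor-product choice of weights, so that the Vandermonde conditions kill every lower-order monomial simultaneously. Everything else in the argument is routine.
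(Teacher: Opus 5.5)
Your proof is correct and is the standard argument behind this result, which the paper only cites from Landsberg--Teitler: the lower bound is the rank $S_{\alpha,\lfloor d/2\rfloor}$ of the middle catalecticant, and your Vandermonde grid is an explicit degeneration of $\prod_{i\ge2}(1+\alpha_i)$ points to the complete-intersection fat point at $[x_1]$ cut out by the operators $\partial_i^{\alpha_i+1}$ ($i\ge 2$), which annihilate the monomial. The one slip is your side remark that the catalecticant is the case $\ld=\emptyset$ of \Cref{thm:Young_flat}. That case forces $\mu=(d)$ and gives the rank-one map $\Bbbk\to\Sym^dV$, $1\mapsto f$; in the paper's convention the catalecticant is $\ld=(\delta^{n-1})$, $\mu=(d,\delta^{n-1})$. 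Your direct semicontinuity-plus-subadditivity argument does not depend on that remark.
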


In particular, it gives
\[
	\binom{n}{\lfloor\frac{n}{2}\rfloor}\le \bwrk(x_1x_2\cdots x_n)\le 2^{n-1}.
\]

\subsection{Proof of the lower bound}
In this section, we prove the main theorem.
Note that as the Young flattening of $x_1\cdots x_n$ is neither $\GL(V)$-equivariant nor $\SL(V)$-equivariant, we cannot use Schur's lemma in the usual manner.
It is still true that
\[
	(\Bbbk^{*})^{n-1}\rtimes\mathfrak{S}_n\subseteq \Stab_{\GL(V)}(\per_n)
\]
where $(\Bbbk^*)^{n-1}$ acts by $e_i\mapsto a_ie_i$ with $a_1\cdots a_n=1$ and $\mathfrak{S}_n$ acts by permuting the $e_i$'s.
However, the domain and codomain of the Young flattening of $\per_n$ need not be irreducible as $\left((\Bbbk^*)^{n-1}\rtimes \mathfrak{S}_{n}\right)$-representations, and the map is hard to track as such representations.
Instead, we perform the essential part of the argument in the $\SL(V)$-equivariant setting.

\begin{thmalphabetmaintext}\label{thm:main}
	It holds that $\bwrk(x_1x_2\cdots x_n)=2^{n-1}$ over fields of characteristic zero.
	Equivalently, $\bwrk(\per_n)=2^{n-1}$.
\end{thmalphabetmaintext}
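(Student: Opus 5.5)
The upper bound $\bwrk(x_1\cdots x_n)\le 2^{n-1}$ is immediate from the polarization identity \eqref{eqn:identity}. For the lower bound we may assume $\Bbbk$ is algebraically closed, since border rank can only drop under field extension. The plan is to apply the Young flattening of \Cref{thm:Young_flat} to $f=x_1\cdots x_n\in\Sym^nV$ with $\dim V=n$.

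The choice of $(\ld,\mu)$ is guided by the Koszul flattening $(\det_n)^{\wedge(0,1,\dots,n-1)}$, which has full rank. Its domain is $\bigotimes_{p=0}^{n-1}\bigwedge^pV$ and its codomain is $\bigotimes_{p=1}^{n}\bigwedge^pV$, and the $\SL(V)$-isotypic pieces are Schur modules of staircase-type shapes. Concretely, I would take $\ld=(n-1,n-2,\dots,1)'$, the staircase $\rho=(n-1,n-2,\dots,1)$, and let $\mu$ be $\ld$ with one box added in each column, including a new first column. Thus $\mu$ is obtained by adding $n$ boxes, no two in the same column, which is exactly Pieri's rule. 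This gives $\mu=\rho+(1^n)$, which modulo $\det$ is $\rho$ again. So $\mathcal F_{\rho,\mu}(f)$ is essentially a map $\s_\rho V\to\s_\rho V$ (up to a twist by $\det$) of size $\dim\s_\rho V=2^{\binom n2}$.

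Next I compute the denominator $\rank\mathcal F_{\rho,\mu}(x_1^n)$. Multiplication by $e_1^n$ followed by Pieri projection has rank equal to the number of SSYT $S$ of shape $\rho$ such that adding a horizontal strip of $1$'s still allows a valid tableau of shape $\mu$. Equivalently, $S$ must have no entry $1$ in any column, i.e.\ $S$ uses only entries $2,\dots,n$. Via the standard branching from $\GL_n$ to $\GL_{n-1}$, this count is $\dim\s_{\rho}(\Bbbk^{n-1})$. Since the staircase has $n-1$ columns and entries in $\{2,\ldots,n\}$, this equals $2^{\binom n2}/2^{n-1}$ by the Weyl dimension formula applied to $\rho_{n-1}$ versus $\rho_n$. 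It then suffices to prove that $\mathcal F_{\rho,\mu}(x_1\cdots x_n)$ has \emph{full rank} $2^{\binom n2}$, which gives the ratio $2^{n-1}$.

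The main obstacle is proving full rank, because $\mathcal F_{\rho,\mu}(f)$ is not $\SL(V)$-equivariant. Here I would use the determinant. The symmetrization $\per_n\in V^{\otimes n}$ is acted on by $\mathfrak S_n$ permuting tensor factors, and $\per_n$ and $\det_n$ are the trivial and sign isotypic components of the $\mathfrak S_n$-orbit of $e_1\otimes\cdots\otimes e_n$. I would realize $\mathcal F_{\rho,\mu}(\per_n)$ as a restriction/projection of the higher-order Koszul map $(e_1\otimes\cdots\otimes e_n)^{\wedge(0,\dots,n-1)}$ applied across tensor factors permuted by $\sigma$. Reindexing the wedge factors then intertwines the permanent version on $\s_\rho$ with the determinant version on the $\s_{\rho'}$-component, and $\rho'=\rho$ since the staircase is self-conjugate. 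The key identity to establish is that, restricted to the $\SL(V)$-isotypic component $\s_\rho V$ inside $\bigotimes\bigwedge^pV$, the operator $(\det_n)^{\wedge(0,\dots,n-1)}$ is $\SL(V)$-equivariant between copies of $\s_\rho V$. By Schur's lemma it is then a scalar on each copy, and it is nonzero because of the full rank result of \cite{HJK25}. One then transports this to $\mathcal F_{\rho,\mu}(\per_n)$, composed with the invertible Young symmetrizer identifications, obtaining an invertible map.

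I expect this transport step to be delicate: matching signs from $b_\ld$ against $\sgn(\sigma)$ so that the permanent on the Young-flattening side corresponds exactly to the determinant on the Koszul side. If the direct identification fails, the fallback is to compute a determinant of the map by ordering the SSYT basis by weight and showing the matrix is block triangular with nonzero diagonal blocks.
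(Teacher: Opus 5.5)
Your setup matches the paper's. This covers the shapes $\rho=(n-1,\dots,1)$ and $\mu=\rho+(1^n)=(n,\dots,1)$, and the reduction to showing that $\mathcal{F}_{\rho,\mu}(\per_n)$ has full rank $2^{\binom n2}$. It also covers the bound $\rank\mathcal{F}_{\rho,\mu}(x_1^n)\le 2^{\binom{n-1}{2}}$. Only the inequality is needed, and it holds because for an SSYT $S$ containing $1$, the vector $e_S\cdot c_\rho$ would land in weight $\mathrm{wt}(S)+n\epsilon_1$, whose first coordinate exceeds $\mu_1=n$. Finally, it covers the use of Schur's lemma to see that the Koszul operator of $\det_n$ on the staircase component is a nonzero scalar.

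The gap is the ``transport'' step. Self-conjugacy of the staircase and the trivial/sign dichotomy in the $\mathfrak{S}_n$-orbit of $e_1\otimes\cdots\otimes e_n$ do not produce any map carrying the determinant operator to the permanent one. Since $\per_n\in\Sym^nV$ and $\det_n\in\bigwedge^nV$ lie in different $\GL(V)$-isotypic components, no equivariant identification exists. A sign-reindexing of the Koszul maps cannot work either. Already for $n=3$, $(\det_3)^{\wedge(0,1,2)}$ is an invertible $9\times 9$ matrix \cite{HJK25}, while $(\per_3)^{\wedge(0,1,2)}$ is singular. Otherwise Theorem~\ref{thm:higher_Koszul} would give $\brk(\per_3)\ge 5$, contradicting $\brk(\per_3)=4$ \cite{MR3987583}. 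Any identification would therefore have to be specific to $\s_\rho V$, and constructing it is equivalent to the invertibility you want. Your fallback only restates the problem: since $x_1\cdots x_n$ has weight $(1^n)$, $\mathcal{F}_{\rho,\mu}(\per_n)$ is block diagonal by weight spaces, and proving every large weight block nonsingular is the whole difficulty.

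The missing idea is arithmetic: $\per_n\equiv\det_n\pmod 2$ as integer tensors. Put $\Psi(z)=\Phi^{-1}\circ\mathscr{C}_\mu\circ(z)^{\wedge}|_{\s_\rho V}$, where $\mathscr{C}_\mu(v)=v\cdot c_\mu$ and $\Phi(v)=(e_1\wedge\cdots\wedge e_n)\otimes v$ is the $\SL(V)$-isomorphism $\s_\rho V\to\s_\mu V$. The paper then does two things.

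First, it shows that the lattice $A=\z\langle e_T\cdot c_\rho\rangle\subseteq\s_\rho V$ has rank $2^{\binom n2}$ and that $\Psi(z)(A)\subseteq A$ for every integral $z$. The key check expands the first column of $e_{\widetilde T}\cdot c_\mu$ to see that $\Phi^{-1}\mathscr{C}_\mu(e_{\widetilde T})$ is a $\z$-combination of elements $e_{\mathcal T}\cdot c_\rho$. Hence, in a $\z$-basis of $A$, the matrices $M_{\per_n}$ and $M_{\det_n}$ are integral and congruent mod $2$.

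Second, it computes the scalar $\Psi(\det_n)$ exactly on the highest weight vector. The result is $(-1)^{\binom n2}\prod_{i=1}^n(2i-1)^{n+1-i}$, the hook-length product of $\mu$, which is odd because every hook of the staircase $(n,\dots,1)$ has odd length.

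So $\det M_{\per_n}\equiv\det M_{\det_n}\equiv 1\pmod 2$, and $\Psi(\per_n)$ is invertible. Mere nonvanishing of the scalar, as you take from \cite{HJK25}, does not suffice; what transfers to $\per_n$ is its parity relative to an integral structure preserved by $\Psi$. Finally, Schur's lemma on $\Sym^nV\otimes\s_\rho V\to\s_\mu V$ gives $\mathscr{C}_\mu\circ(-)^{\wedge}|_{\s_\rho V}=\gamma\,\mathcal{F}_{\rho,\mu}(-)$. Full rank at $\per_n$ then forces $\gamma\neq 0$ and $\rank\mathcal{F}_{\rho,\mu}(\per_n)=2^{\binom n2}$.
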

\begin{proof}
It is enough to prove that $\bwrk(x_1x_2\cdots x_n)\ge 2^{n-1}$ when $\Bbbk$ is algebraically closed.
As in \Cref{sec:representation_theory}, we identify $x_i=e_i\in V$.
We pick the partitions $\ld=(n-1,n-2,\dots,1)$, $\lt=(n,n-1,\dots,1)$, and regard these as Young diagrams.\footnote{These shapes were suggested by Oeding \cite{Oed16}. For details, see \Cref{subsec:Oeding}.}
The higher-order Koszul flattening of the $n\times n$ determinant tensor $\det_n$ with $(p_1,p_2,\dots,p_n)=(0,1,\dots,n-1)$ is
\begin{align*}
	(\det_n)^{\wedge(0,1,\dots,n-1)}_{(V_1,V_2,\dots,V_{n})}:&\bigwedge^{n-1} V_{n}\otimes\bigwedge^{n-2}V_{n-1}\otimes\cdots\otimes\bigwedge^1 V_2\\*
	&\to \bigwedge^n V_{n}\otimes \bigwedge^{n-1} V_{n-1}\otimes \cdots \otimes \bigwedge^2 V_2\otimes \bigwedge^1 V_1.
\end{align*}
We simply denote this map by $(\det_n)^{\wedge}$.
As $\det_n$ is $\SL(V)$-invariant and the higher-order Koszul flattening map $z\mapsto (z)^{\wedge}$ is $\GL(V)$-equivariant, the map $(\det_n)^{\wedge}$ is $\SL(V)$-equivariant.

Consider the restriction
\[
	(\det_n)^{\wedge}|_{\s_\ld V}:\s_\ld V\to  \bigwedge^n V\otimes \bigwedge^{n-1} V\otimes \cdots \otimes \bigwedge^2 V\otimes V,
\]
and denote $W_{\lt}:=\bigwedge^n V\otimes \bigwedge^{n-1} V\otimes \cdots \otimes \bigwedge^2 V\otimes V$.
We decompose $W_{\lt}$ into irreducible $\SL(V)$-subrepresentations:
\[
	W_{\lt} = \bigoplus_\tau\s_\tau V.
\]

We claim that $\s_{\lt} V$ has multiplicity one on the right hand side.
From Pieri's formula, we have
\[
	\s_\tau V\otimes\bigwedge^m V\cong \bigoplus_{\nu\in \operatorname{ext}(\tau)}\s_\nu V 
\]
where $\operatorname{ext}(\tau)$ denotes the list of Young diagrams that are obtained by adding $m$ boxes to $\tau$, with no two in the same row.
Suppose that we have a representation \[U:=\bigoplus_{\pi\in L}\s_\pi V,\] where $L$ is a collection of some Young diagrams $\pi$ such that $\pi\vdash \sum_{i=1}^{m-1} i$ and $(m-1,m-2,\dots,1)$ appears in $L$ with multiplicity one.
Then
\begin{align}
	U\otimes\bigwedge^mV &\cong \left(\bigoplus_{\pi\in L}\s_\pi V\right)\otimes \bigwedge^m V\notag\\
	&\cong \bigoplus_{\pi\in L}\left(\s_\pi V\otimes \bigwedge^m V\right)\notag\\
	&\cong \bigoplus_{\pi\in L}\left(\bigoplus_{\nu\in\operatorname{ext}(\pi)}\s_\nu V \right).\label{eqn:induction_box}
\end{align}
The Young diagram $(m,m-1,m-2,\dots,1)$ appears only once as a summand in \eqref{eqn:induction_box} due to Pieri formula's box adding rule.
Indeed, $\tau=(m-1,m-2,\dots,1)$ is the only element in $L$ such that $\operatorname{ext}(\tau)$ contains $(m,m-1,m-2,\dots,1)$.
Therefore the claim holds by induction on $m$.

Note that $\lt$ is the only shape among the $\tau$'s on the right hand side of $W_{\lt} = \bigoplus_\tau\s_\tau V$ such that $\s_\tau V\cong \s_\ld V$ as $\SL(V)$-representations.
Indeed, $\s_\tau V\cong \s_\ld V$ implies that there is an integer $a$ such that $\tau=\ld+(a^n)$.
As $\tau\vdash\sum_{i=1}^ni$ and $\ld\vdash\sum_{i=1}^{n-1}i$, we get $a=1$.
In particular, $\s_\lt V$ itself is the isotypic component of $W_\lt$ as it has multiplicity one also.

Denote the highest weight vectors of $\s_\ld V$ and $\s_\lt V$ by
\begin{align*}
	h &:=(e_1\wedge e_2\wedge\cdots \wedge e_{n-1})\otimes(e_1\wedge \cdots \wedge e_{n-2})\otimes\cdots\otimes e_1\in\s_\ld V\\
	\wt{h} &:=(e_1\wedge e_2\wedge\cdots \wedge e_n)\otimes(e_1\wedge \cdots \wedge e_{n-1})\otimes\cdots\otimes e_1\in\s_\lt V.
\end{align*}
Then we have
\[
	(\det_n)^{\wedge}|_{\s_\ld V}(h)=(-1)^{\binom{n}{2}}\wt{h}.
\]
Therefore, $(\det_n)^{\wedge}|_{\s_\ld V}(h)\in \s_{\lt} V$.
Then by Schur's lemma, we may denote the map as
\[
	(\det_n)^{\wedge}|_{\s_\ld V}: \s_{\ld} V\to  \s_{\lt} V
\]
since $\s_\lt V$ is an isotypic component.
Consider the map
\begin{align*}
	\mathscr{C}_\lt:V^{\otimes \binom{n+1}{2}} &\to\s_\lt V\\
	v&\mapsto v\cdot c_\lt
\end{align*}
where we regard $\s_\lt V\subseteq W_\lt\subseteq V^{\otimes \binom{n+1}{2}}$ as in \Cref{subsec:Schur_module}.
Note that the map
\begin{align*}
	\Phi: \s_\ld V&\to W_\lt\\
	v&\mapsto (e_1\wedge e_2\wedge \cdots\wedge e_n)\otimes v
\end{align*}
is $\SL(V)$-equivariant since $e_1\wedge e_2\wedge \cdots\wedge e_n=\det_n$ is $\SL(V)$-invariant.
Furthermore, $\Phi(h)=\widetilde{h}\in \s_\lt V$.
As $\s_\lt V$ is an isotypic component of $W_\lt$, we get $\Im(\Phi)\subseteq\s_\lt V$.
Hence we regard this map as
\begin{align*}
	\Phi: \s_\ld V&\to \s_\lt V
\end{align*}
which must be an isomorphism by Schur's lemma.

Define
\begin{align*}
	\Psi: V^{\otimes n} &\to \End_\Bbbk(\s_\ld V)\\*
	z &\mapsto \Phi^{-1}\circ \mathscr{C}_\lt\circ (z)^{\wedge}|_{\s_\ld V}.
\end{align*}
Then we consider
\[
	\Psi(\det_n)=\Phi^{-1}\circ \mathscr{C}_\lt\circ (\det_n)^{\wedge}|_{\s_\ld V}:\s_\ld V \to \s_\ld V.
\]
Following \Cref{not:e_T}, denote $e_{T_0}=(e_1)^{\otimes n}\otimes (e_2)^{\otimes n-1}\otimes \cdots\otimes (e_n)\in V^{\otimes\binom{n+1}{2}}$ where $T_0$ is a Young tableau of shape $\lt$ filled with the number $i$ at all boxes in the $i$-th row.

Then we get
\begin{align*}
	\Psi(\det_n)(h) &= \Phi^{-1}\circ \mathscr{C}_\lt\circ(\det_n)^{\wedge}|_{\s_\ld V}(h) \\
	&= \Phi^{-1}\circ \mathscr{C}_\lt \left((-1)^{\binom{n}{2}}\wt{h}\right) \\
	&= \Phi^{-1}\circ \mathscr{C}_\lt\left((-1)^{\binom{n}{2}}\frac{1}{\prod_{i=1}^n i!}e_{T_0}\cdot c_\lt\right)\\
	&= \Phi^{-1}\left((-1)^{\binom{n}{2}}\frac{1}{\prod_{i=1}^n i!}e_{T_0}\cdot c_\lt^2\right)\\
	&= \Phi^{-1}\left((-1)^{\binom{n}{2}}\frac{1}{\prod_{i=1}^n i!}e_{T_0}\cdot n_\lt c_\lt\right)\\
	&= \Phi^{-1}\left((-1)^{\binom{n}{2}}n_\lt \wt{h}\right)\\
	&= (-1)^{\binom{n}{2}}n_\lt h.
\end{align*}
Here, $n_\lt\in\Bbbk$ is a scalar such that $c_\lt^2=n_\lt c_\lt$.
It is well known that $n_\lt$ is the product of hook lengths on $\lt$ (cf. \cite[Formula 4.12 and Lemma 4.26]{MR1153249}).
From the shape of $\lt$, we get
\[
	n_\lt = \prod_{i=1}^n\left(2i-1\right)^{n+1-i},
\]
hence
\[
	\Psi(\det_n)(h) = (-1)^{\binom{n}{2}}\prod_{i=1}^n\left(2i-1\right)^{n+1-i}h.
\]
Then, by Schur's lemma,
\[
	\Psi(\det_n) = \left((-1)^{\binom{n}{2}}\prod_{i=1}^n\left(2i-1\right)^{n+1-i}\right)\Id_{\s_\ld V}.
\]
Note that
\[
	\dim \s_\ld V=\prod_{1\le i<j\le n}\frac{\ld_i-\ld_j+j-i}{j-i}=2^{\binom{n}{2}}.
\]
Thus, whenever we fix a basis $\mathcal{B}$ on both the source and the target, we get a matrix $M_{\det_n}$ corresponding to $\Psi(\det_n)$ such that
\[
	\det(M_{\det_n})=\left((-1)^{\binom{n}{2}}\prod_{i=1}^n\left(2i-1\right)^{n+1-i}\right)^{2^{\binom{n}{2}}}.
\]
Note that this does not depend on the choice of the basis $\mathcal{B}$.
Moreover, as $\Psi(\det_n)$ is an integer multiple of $\Id_{\s_\ld V}$, the matrix $M_{\det_n}$ has integer entries for every choice of $\mathcal{B}$.
We will fix the basis later.

Finally, we drop the $\SL(V)$-equivariance.
Let $\per_n=x_1\cdots x_n$ and consider the map
\[
	\Psi(\per_n)=\Phi^{-1}\circ \mathscr{C}_\lt\circ (\per_n)^{\wedge}|_{\s_\ld V}:\s_\ld V\to\s_{\ld} V.
\]
As pointed out in \Cref{subsec:Young_flattening}, any $\GL(V)$-equivariant map $\Sym^nV\otimes \s_\ld V\to\s_\lt V$ is a scalar multiple of the Pieri map $\Sym^nV\otimes \s_\ld V\to\s_\lt V$ by Schur's lemma.
Since the higher-order Koszul flattening map $z\mapsto (z)^{\wedge}$ and $\mathscr{C}_\lt$ are $\GL(V)$-equivariant, the map
\begin{align*}
	\mathscr{C}_\lt\circ (-)^{\wedge}|_{\s_\ld V}:\Sym^n V\otimes \s_\ld V &\to \s_\lt V\\*
	f\otimes v &\mapsto \mathscr{C}_\lt\circ (f)^{\wedge}|_{\s_\ld V}(v)
\end{align*}
is $\GL(V)$-equivariant, and we get
\begin{equation}\label{eqn:scalar}
	\mathscr{C}_\lt\circ (-)^{\wedge}|_{\s_\ld V}=\gamma\mathcal{F}_{\ld,\lt}(-)
\end{equation}
for some $\gamma\in\Bbbk$ where $\mathcal{F}_{\ld,\lt}(-)$ denotes the Young flattening.
Note that only the left hand side of \eqref{eqn:scalar} extends to arbitrary tensors in $V^{\otimes n}$.

We claim that there exists a basis $\mathcal{B}$ of $\s_\ld V$ such that for every
\[
	z\in \z\la e_{i_1}\otimes \cdots\otimes e_{i_n}\mid i_1,\dots,i_n\in \{1,\dots,n\}\ra\subseteq V^{\otimes n},
\]
the matrix $M_z$ corresponding to $\Psi(z)=\Phi^{-1}\circ \mathscr{C}_\lt\circ (z)^{\wedge}|_{\s_\ld V}:\s_\ld V\to\s_{\ld}V$ with respect to the basis $\mathcal{B}$ has integer entries.
Let
\[
	A:=\z\left\la e_T\cdot c_\ld\ \middle|\ 
	\begin{aligned}
		&T\text{ is a Young tableau of shape }\ld\\
		&\text{with entries in $\{1,\dots,n\}$}
	\end{aligned}
	\right\ra\subseteq \s_\ld V.
\]
Note that
\[
	(z)^{\wedge}|_{\s_\ld V}\left(e_{T}\cdot c_\ld\right)\in \z\left\la e_{\wt{T}}\ \middle|\ 
	\begin{aligned}
		&\wt{T}\text{ is a Young tableau of shape }\lt\\
		&\text{with entries in $\{1,\dots,n\}$}
	\end{aligned}
	\right\ra
\]
since $c_\ld$ preserves integer coefficients and $z$ has integer coefficients.
Pick a Young tableau $\wt{T}$ of shape $\lt$ with entries in $\{1,\dots,n\}$ and denote by $\wt{T}_{i,j}$ the entry in the $i$-th row and $j$-th column.
Then
\[
	\mathscr{C}_\lt(e_{\wt{T}})=e_{\wt{T}}\cdot c_\lt=\sum_{(\sigma_1,\dots,\sigma_n)}\bigotimes_{i=1}^n\left(e_{\wt{T}_{1,\sigma_1(i)}}\wedge e_{\wt{T}_{2,\sigma_2(i)}}\wedge\cdots\wedge e_{\wt{T}_{n+1-i,\,\sigma_{n+1-i}(i)}}\right)
\]
where the summation is taken over $(\sigma_1,\dots,\sigma_n)\in\mathfrak{S}_n\times\cdots\times\mathfrak{S}_1$.
In each summand, the first factor $e_{\wt{T}_{1,\sigma_1(1)}}\wedge e_{\wt{T}_{2,{\sigma_2(1)}}}\wedge \cdots \wedge e_{\wt{T}_{n,{\sigma_n(1)}}}$ equals $\pm e_1\wedge\cdots\wedge e_n$ when $\wt{T}_{1,\sigma_1(1)},\dots,\wt{T}_{n,\sigma_n(1)}$ are pairwise distinct.
Otherwise, the first factor is zero.
By fixing $\sigma_1(1),\dots,\sigma_n(1)$, the right hand side becomes zero or
\begin{equation}\label{eqn:form}
	\sum_{(\sigma'_1,\dots,\sigma'_{n-1})}(\pm e_1\wedge\cdots\wedge e_n)\otimes\bigotimes_{i=2}^n\left(e_{\wt{T}_{1,\sigma'_1(i)}}\wedge e_{\wt{T}_{2,\sigma'_2(i)}}\wedge\cdots\wedge e_{\wt{T}_{n+1-i,\,\sigma'_{n+1-i}(i)}}\right)
\end{equation}
where each $\sigma'_j$ runs over the set of bijections from $\{1,2,\dots,n+1-j\}\setminus\{1\}$ to $\{1,2,\dots,n+1-j\}\setminus\{\sigma_j(1)\}$.
Therefore, $\mathscr{C}_\lt(e_{\wt{T}})$ is a $\z$-linear combination of the elements of the above form \eqref{eqn:form}.
By taking off $e_1\wedge\cdots\wedge e_n$, we get
\begin{equation}\label{eqn:tensor_sum}
	\sum_{(\sigma'_1,\dots,\sigma'_{n-1})}\bigotimes_{i=2}^n\left(e_{\wt{T}_{1,\sigma'_1(i)}}\wedge e_{\wt{T}_{2,\sigma'_2(i)}}\wedge\cdots\wedge e_{\wt{T}_{n+1-i,\,\sigma'_{n+1-i}(i)}}\right).	
\end{equation}
Let $\mathcal{T}$ be the Young tableau of shape $\ld$ obtained by removing the box numbered $\sigma_j(1)$ in the $j$-th row of $\widetilde{T}$ for each $j$.
Then \eqref{eqn:tensor_sum} is equal to $e_{\mathcal{T}}\cdot c_{\ld}$.
Hence $\Phi^{-1}\circ \mathscr{C}_\lt(e_{\wt{T}})$ is a $\z$-linear combination of elements of the form $e_{\mathcal{T}}\cdot c_\ld$.
In summary, $(z)^{\wedge}|_{\s_\ld V}\left(e_{T}\cdot c_\ld\right)$ is a $\z$-linear combination of elements of the form $e_{\wt{T}}$, and each $\Phi^{-1}\circ \mathscr{C}_\lt(e_{\wt{T}})$ is a $\z$-linear combination of elements of the form $e_{\mathcal{T}}\cdot c_\ld$.
In particular, it holds that
\[
	\Psi(z)(A)\subseteq A.
\]
As $A$ is a finitely generated torsion free abelian group, it is a free abelian group.
Furthermore, $\s_\ld V$ has a basis
\[
	\mathcal{S}:=\{e_S\cdot c_\ld\mid S\text{ is an SSYT of shape }\ld\text{ with entries in }\{1,\dots,n\}\}
\]
whose cardinality is $2^{\binom{n}{2}}$.
Hence the rank of $A$, as a free abelian group, is at least $2^{\binom{n}{2}}$ as $\mathcal{S}\subseteq A$.
On the other hand, any $\z$-linearly independent elements of $A$ are $\q$-linearly independent, and those are again $\Bbbk$-linearly independent.
Together with $\dim \s_\ld V=2^{\binom{n}{2}}$, this implies that there are at most $2^{\binom{n}{2}}$ $\z$-linearly independent elements of $A$.
Therefore the rank of $A$ is at most $2^{\binom{n}{2}}$.
In summary, the rank of $A$ is $2^{\binom{n}{2}}$.
Pick a basis $\mathcal{B}$ of $A$ as a free abelian group, and consider it as a basis of $\s_\ld V$.
Then such $\mathcal{B}$ satisfies the claim.

Note that $\per_n\equiv \det_n$ modulo 2.
Furthermore, the map $\Psi$ is linear.
Therefore, $\det(M_{\per_n})\equiv\det(M_{\det_n})$ modulo 2.
As $\det(M_{\det_n})\equiv 1$ modulo 2, we get that $\det(M_{\per_n})$ is nonzero.
Therefore $\Psi(\per_n)$ has full rank.
Since $\Phi^{-1}$ is an isomorphism, the map $\mathscr{C}_\lt\circ(\per_n)^{\wedge}|_{\s_\ld V}$ also has full rank.
As
\[
	\mathscr{C}_\lt\circ (\per_n)^{\wedge}|_{\s_\ld V}=\gamma\mathcal{F}_{\ld,\lt}(\per_n),
\]
we get $\gamma\ne 0$ and $\mathcal{F}_{\ld,\lt}(\per_n)$ also has full rank.
Therefore, \[\rank\mathcal{F}_{\ld,{\lt}}(\per_n)=\dim \s_\ld V=2^{\binom{n}{2}}.\]

Now we show that $\rank\mathcal{F}_{\ld,{\lt}}(x_1^n)\le 2^{\binom{n-1}{2}}$.\footnote{Also cf. \cite[Lemma 3.2]{Oed16}.}
As
\[
	\mathscr{C}_\lt\circ(x_1^n)^{\wedge}|_{\s_\ld V}=\gamma\mathcal{F}_{\ld,\lt}(x_1^n)
\]
and we have seen $\gamma\ne 0$, it is enough to show
\[
	\rank \left((x_1^n)^{\wedge}|_{\s_\ld V}\right)\le 2^{\binom{n-1}{2}}.
\]
Let $S$ be an SSYT of shape $\ld$ with entries in $\{1,\dots,n\}$.
If $S$ contains $1$, then every term of $e_S\cdot c_\ld$ contains $e_1$ in some factor.
Thus $(x_1^n)^{\wedge}|_{\s_\ld V}(e_S\cdot c_\ld)=0$.
Therefore $(x_1^n)^{\wedge}|_{\s_\ld V}$ kills the elements of
\[
	\{e_S\cdot c_\ld\mid S\text{ is an SSYT of shape }\ld\text{ containing $1$ with entries in }\{1,\dots,n\}\}.
\]
Thus $\rank \left((x_1^n)^{\wedge}|_{\s_\ld V}\right)$ is at most the number of SSYTs of shape $\ld$ with entries in $\{2,\dots,n\}$, and we get $\rank \left((x_1^n)^{\wedge}|_{\s_\ld V}\right)\le 2^{\binom{n-1}{2}}$.

Therefore, it holds that
\[
	\bwrk(\per_n)\ge \frac{\rank\mathcal{F}_{\ld,{\lt}}(\per_n)}{\rank\mathcal{F}_{\ld,{\lt}}(x_1^n)}\ge \frac{2^{\binom{n}{2}}}{2^{\binom{n-1}{2}}}=2^{n-1}
\]
by \Cref{thm:Young_flat}.
Together with the upper bound $\bwrk(\per_n)\le 2^{n-1}$ given by the polarization identity \eqref{eqn:identity}, we conclude $\bwrk(\per_n)=2^{n-1}$.
\end{proof}

\section{Final remarks}\label{sec:final}
\subsection{Oeding's condition}\label{subsec:Oeding}
Oeding \cite{Oed16} proposed to consider the specific form of Young flattening $\mathcal{F}_{\ld,\mu}(f)$ for the monomial $f=x_1^{\alpha_1}x_2^{\alpha_2}\cdots x_n^{\alpha_n}$.
A partition $\nu$ is called \emph{$\alpha$-optimal} if
\[
	\nu = \left(\sum_{i=2}^n\alpha_i,\sum_{i=2}^{n-1}\alpha_i,\cdots,\sum_{i=2}^2\alpha_i\right).
\]
He set $\lambda$ to be the $\alpha$-optimal partition and $\mu=(d,\lambda)$.
Using these, he saw a possible route to prove
\[
	\bwrk(x_1^{\alpha_1}x_2^{\alpha_2}\cdots x_n^{\alpha_n})\ge \prod_{i=2}^n (\alpha_i+1)
\]
where $\alpha_1\ge \alpha_2\ge\cdots\ge\alpha_n$ and
\begin{equation}\label{eqn:Oedings_condition}
	(\alpha_2,\dots,\alpha_n)\preceq (\alpha_1,\alpha_n,\alpha_{n-1},\dots,\alpha_3).
\end{equation}
Here, $(\beta_1,\dots,\beta_l)\preceq (\gamma_1,\dots,\gamma_m)$ holds if and only if
\[
	\sum_{i=1}^j\beta_i\le \sum_{i=1}^j\gamma_i
\]
for all $j$.
If $\rank \mathcal{F}_{\lambda,(d,\lambda)}(x_1^{\alpha_1}x_2^{\alpha_2}\cdots x_n^{\alpha_n})=\dim (\s_\ld V_0)\prod_{i=2}^n(\alpha_i+1)$ holds where $V_0=\la e_2,\dots,e_n\ra$, then \Cref{conj:monomial} holds for $x_1^{\alpha_1}x_2^{\alpha_2}\cdots x_n^{\alpha_n}$.
The monomials satisfying the condition \eqref{eqn:Oedings_condition} include
\begin{itemize}
	\item $(x_1x_2\cdots x_n)^k$ for every $k$
	\item monomials of degree at most 7 except one: $x_1^2x_2^2x_3^2x_4$
\end{itemize}
which cover a large family of monomials.
Those also cover monomials with three variables, but this case is already resolved in \cite{MR4332674}.
Note that $x_1^2x_2^2x_3^2x_4$ is also resolved in \cite[Example 6.6]{MR4332674}.

\begin{question}[\cite{Oed16}]\label{question:Oeding}
	For every $\alpha=(\alpha_1,\dots,\alpha_n)$ satisfying Oeding's condition \eqref{eqn:Oedings_condition} with $\alpha$-optimal partition $\lambda$, does it hold that
	\[
		\rank\mathcal{F}_{\lambda,(d,\lambda)}(x_1^{\alpha_1}x_2^{\alpha_2}\cdots x_n^{\alpha_n})=\dim (\s_\ld V_0)\prod_{i=2}^n(\alpha_i+1)?
	\]
\end{question}

\subsection{Border tensor ranks and border Comon's conjecture}\label{subsec:border_Comon}
The assertion that
\[
	\brk(f)=\bwrk(f)
\]
holds for every symmetric tensor $f\in \Sym^d V$ is called \emph{border Comon's conjecture} \cite{MR3092255}.
Partial positive results are given in the same paper and in a paper by Ma{\'n}dziuk and Ventura \cite{MR5116873}.
On the other hand, no symmetric tensor $f$ with $\brk(f)\ne\bwrk(f)$ is known to the best of the author's knowledge.
While \Cref{thm:main} shows $\bwrk(\per_n)=2^{n-1}$, it still remains open whether $\brk(\per_n)=2^{n-1}$ holds.
It is proved that
\[
	\brk(\per_3)=4
\]
by Derksen and Makam \cite{MR3987583}.
Also, it is known that
\[
	\brk(\per_4)=8\text{ and }\brk(\per_5)\in\{15,16\}
\]
\cite{HJK25}.
If $\brk(\per_5)=15$, then $\per_5$ is a counterexample to border Comon's conjecture.
Note that $\rk(\per_5)=16$ \cite{HS26}.
The tensor rank of $\per_n$ is not yet determined for $n\ge 6$, and the border tensor rank is not yet determined for $n\ge 5$.

\subsection{Cactus ranks and the cactus barrier}\label{subsec:cactus}
The \emph{cactus rank} $\crk(f)$ of a nonzero symmetric tensor $f\in \Sym^d V$ is defined as follows.
\[
	\crk(f)=\min\left\{r\in\z\ \middle|\ 
	\begin{aligned}
		&[f]\in \la Z\ra\text{ for some $0$-dimensional}\\
		&\text{subscheme $Z\subseteq\nu_d(\p V)$ of length $r$}
	\end{aligned}
	\right\}
\]
Here, $\nu_d$ denotes the $d$-th Veronese embedding and $\la Z\ra$ denotes the linear span of $Z$ in $\p(\Sym^d V)$.
The \emph{border cactus rank} $\bcrk(f)$ of $f$ is the smallest integer $r$ such that $f$ lies in the Zariski closure of the set of $d$-forms of cactus rank at most $r$.
By definition, it holds that
\[
	\begin{array}{ccc}
		\crk(f) &\le &\wrk(f)\\[3pt]
		\rotatebox[origin=c]{90}{$\le$} & & \rotatebox[origin=c]{90}{$\le$}\\[3pt]
		\bcrk(f) &\le &\bwrk(f).
	\end{array}
\]

The \emph{cactus barrier} is a phenomenon that determinantal methods such as Young flattenings give a lower bound not only for the border Waring rank, but also for the border cactus rank \cite{MR3121848,MR3611482,MR4332674,Buc26}.
The following is a special case of a general result of Buczy{\'n}ski \cite{Buc26}.

\begin{theorem}[{cf. \cite[Theorem 1.3]{Buc26}}]\label{thm:cactus_barrier}
	Under the notation in \Cref{subsec:Young_flattening}, it holds that
	\[
		\bcrk(f)\ge \frac{\rank\mathcal{F}_{\ld,\mu}(f)}{\rank\mathcal{F}_{\ld,\mu}(x_1^d)}.
	\]
\end{theorem}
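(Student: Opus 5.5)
The plan is to prove the bound first for the cactus rank $\crk(f)$ and then pass to the closure. The passage is routine. For fixed $\ld,\mu$, the set $\{f : \rank\mathcal{F}_{\ld,\mu}(f)\le r\cdot\rank\mathcal{F}_{\ld,\mu}(x_1^d)\}$ is Zariski closed, since it is cut out by minors. So if it contains every $f$ with $\crk(f)\le r$, it contains every $f$ with $\bcrk(f)\le r$. Hence it suffices to show $\rank\mathcal{F}_{\ld,\mu}(f)\le r\cdot\rank\mathcal{F}_{\ld,\mu}(x_1^d)$ whenever $[f]\in\la \nu_d(Z)\ra$ for a $0$-dimensional subscheme $Z\subseteq\p V$ of length $r$. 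No smoothability of $Z$ is assumed; this is exactly what separates the statement from \Cref{thm:Young_flat}.

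Next, globalize the Young flattening over $\p V$. By $\GL(V)$-equivariance of the Pieri map, the assignment $[\ell]\mapsto \mathcal{F}_{\ld,\mu}(\ell^d)$ is induced by a morphism of homogeneous vector bundles
\[
\varphi:\ \s_\ld V\otimes\mathcal{O}_{\p V}(-d)\to \s_\mu V\otimes \mathcal{O}_{\p V}.
\]
Its fibre at $[\ell]$ is $\mathcal{F}_{\ld,\mu}(\ell^d)$. Let $\mathcal{E}:=\Im(\varphi)$. Since $\varphi$ is $\GL(V)$-equivariant and $\GL(V)$ acts transitively on $\p V$, the sheaf $\mathcal{E}$ is homogeneous, hence locally free. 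Its rank equals the rank of any fibre map, namely $\rank\mathcal{F}_{\ld,\mu}(x_1^d)$.

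The key step is a factorization. Regard $f$ as a linear functional on $\Sym^d V^*=H^0(\mathcal{O}_{\p V}(d))$. The condition $[f]\in\la\nu_d(Z)\ra$ means $f$ annihilates $H^0(I_Z(d))$, so $f$ factors through the restriction $H^0(\mathcal{O}(d))\to H^0(\mathcal{O}_Z(d))$. Dualizing and tensoring with $\varphi$, the matrix coefficients of $\mathcal{F}_{\ld,\mu}(f)$ factor as
\[
\s_\ld V\to H^0(\mathcal{E}(d)|_Z)^{\vee}\text{-type space}\to \s_\mu V,
\]
more precisely through $H^0(Z,\mathcal{E}|_Z\otimes\mathcal{O}_Z(d))$ paired with a functional on $H^0(\mathcal{O}_Z(d))$ extending $f$. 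Since $Z$ is $0$-dimensional of length $r$ and $\mathcal{E}$ is locally free of rank $e$, we have $\dim H^0(\mathcal{E}|_Z\otimes\mathcal{O}_Z(d))=re$. This gives $\rank\mathcal{F}_{\ld,\mu}(f)\le re$, as required.

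I expect the main obstacle to be making this factorization precise, specifically checking that $\mathcal{F}_{\ld,\mu}(f)$ genuinely passes through sections of $\mathcal{E}$ restricted to $Z$ and not merely through $\s_\mu V\otimes H^0(\mathcal{O}_Z(d))$. The latter would only give the weaker bound $r\dim\s_\mu V$. Local freeness of $\mathcal{E}$, obtained from homogeneity, is essential here: for a non-reduced $Z$, a non-locally-free image sheaf could have length larger than $r\cdot e$ on $Z$. I would first verify the factorization in the Koszul-flattening model, where $\varphi$ is explicit, and then transport it to the Pieri map via Schur's lemma.
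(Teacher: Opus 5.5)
Your proof is correct, but it follows a different route from the paper. The paper does not prove this statement; it only quotes it as a special case of Buczy\'nski's general cactus-barrier theorem \cite[Theorem 1.3]{Buc26}. Your argument is self-contained, and it runs as follows.
\begin{enumerate}
\item Realize $\mathcal{F}_{\ld,\mu}$ as the global-sections map of $\varphi(d)\colon \s_\ld V\otimes\mathcal{O}_{\p V}\to \s_\mu V\otimes\mathcal{O}_{\p V}(d)$.
\item By $\GL(V)$-equivariance, $\rank\mathcal{F}_{\ld,\mu}(\ell^d)$ does not depend on $\ell$. Hence $\mathcal{E}=\Im\varphi$ is a subbundle of rank $e=\rank\mathcal{F}_{\ld,\mu}(x_1^d)$.
\item Bound $\rank\mathcal{F}_{\ld,\mu}(f)$ by $\dim H^0(Z,\mathcal{E}(d)|_Z)=re$.
\end{enumerate}

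The step you flag as the main obstacle is actually formal. Three facts combine:
\begin{itemize}
\item $\mathcal{F}_{\ld,\mu}(f)=(\Id\otimes f)\circ H^0(\varphi(d))$. Pair $\ell^d$ with $H^0(\mathcal{O}(d))$ by evaluation at $\ell$ and use linearity in $f$.
\item $f=\phi\circ\mathrm{res}_Z$ for some $\phi\in H^0(\mathcal{O}_Z(d))^{\vee}$.
\item $\varphi(d)$ factors through its surjection onto $\mathcal{E}(d)$, and restriction to $Z$ is functorial.
\end{itemize}
Together they give directly
\[
\mathcal{F}_{\ld,\mu}(f)\colon\ \s_\ld V\to H^0(Z,\mathcal{E}(d)|_Z)\to \s_\mu V\otimes H^0(\mathcal{O}_Z(d))\xrightarrow{\ \Id\otimes\phi\ }\s_\mu V .
\]
So no detour through the Koszul-flattening model and Schur's lemma is needed. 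Also, the intermediate space is $H^0(Z,\mathcal{E}(d)|_Z)$ itself, not a dual of it.

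Your route makes the role of homogeneity explicit. It is used only to make the image sheaf locally free, and local freeness is what controls its length on a non-reduced, possibly non-smoothable $Z$. You also identified correctly why this fails for a general linear map without constant rank. In effect this is the standard argument that vector-bundle flattenings are subject to the cactus barrier, applied to the image subbundle. The paper's citation costs nothing locally, since it needs the inequality only for the particular shapes $\ld,\lt$ of the main proof. The cited theorem is a general statement of which this is one instance.
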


As a consequence of \Cref{thm:main} and \Cref{thm:cactus_barrier}, we get the following proposition.
\begin{proposition}
	It holds that $\bcrk(x_1x_2\cdots x_n)=2^{n-1}$ over fields of characteristic zero.
\end{proposition}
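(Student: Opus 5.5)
The plan is to sandwich $\bcrk(x_1x_2\cdots x_n)$ between two bounds that both equal $2^{n-1}$; essentially everything needed has already been established in the proof of \Cref{thm:main}.

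For the upper bound, I use the chain of inequalities displayed above: $\bcrk(f)\le\crk(f)\le\wrk(f)$. Since $\wrk(x_1\cdots x_n)\le 2^{n-1}$ by the polarization identity \eqref{eqn:identity}, we get $\bcrk(x_1\cdots x_n)\le 2^{n-1}$. One could equally use $\bcrk\le\bwrk$ together with \Cref{thm:main}. This bound holds over any field of characteristic zero.

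For the lower bound, I first reduce to $\Bbbk$ algebraically closed, exactly as in the proof of \Cref{thm:main}. I then apply \Cref{thm:cactus_barrier} with $\ld=(n-1,n-2,\dots,1)$ and $\mu=\lt=(n,n-1,\dots,1)$. In the proof of \Cref{thm:main} two facts were shown. First, $\rank\mathcal{F}_{\ld,\lt}(\per_n)=2^{\binom{n}{2}}$; this came from the mod-$2$ comparison of $\Psi(\per_n)$ with $\Psi(\det_n)$, using $\gamma\neq0$. Second, $\rank\mathcal{F}_{\ld,\lt}(x_1^n)\le 2^{\binom{n-1}{2}}$. The ratio of these is $2^{\binom{n}{2}-\binom{n-1}{2}}=2^{n-1}$, so \Cref{thm:cactus_barrier} gives $\bcrk(x_1\cdots x_n)\ge 2^{n-1}$.

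The only delicate point is making sure the base-field reduction is compatible with the border cactus rank. The rank computations are exact matrix ranks with integer entries, so they are stable under field extension. For the reduction itself, I would argue as follows. The closure defining $\bcrk$ is taken in a variety defined over the prime field. The cactus varieties, and hence their closures, are stable under base change to $\overline{\Bbbk}$. Therefore membership of $x_1\cdots x_n$ in the $r$-th closure over $\Bbbk$ implies membership over $\overline{\Bbbk}$, so a lower bound proved over $\overline{\Bbbk}$ transfers down to $\Bbbk$. I expect this to be the main (mild) obstacle. Since the statement is phrased exactly as \Cref{thm:main} was, I would rely on the same convention used there and cite \cite{Buc26} for the validity of \Cref{thm:cactus_barrier} in this setting.
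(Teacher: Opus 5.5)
Your proposal is correct and follows the paper's proof. The upper bound comes from the polarization identity via $\bcrk\le\crk\le\wrk$. For the lower bound, after passing to the algebraic closure, you feed the two rank estimates $\rank\mathcal{F}_{\ld,\lt}(\per_n)=2^{\binom{n}{2}}$ and $\rank\mathcal{F}_{\ld,\lt}(x_1^n)\le 2^{\binom{n-1}{2}}$ from the proof of \Cref{thm:main} into \Cref{thm:cactus_barrier}.

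Your added base-change justification, that $\bcrk$ can only drop over $\overline{\Bbbk}$, is a harmless elaboration of the paper's one-line reduction. One small correction: in the paper, $\gamma\neq 0$ is a consequence of the full-rank argument, not an input to it.
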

\begin{proof}
	We may assume $\Bbbk$ is algebraically closed by taking the algebraic closure.
	The proof of \Cref{thm:main} shows that
	\[
		\frac{\rank\mathcal{F}_{\ld,\lt}(x_1\cdots x_n)}{\rank\mathcal{F}_{\ld,\lt}(x_1^n)}\ge 2^{n-1},
	\]
	hence we get $\bcrk(x_1\cdots x_n)\ge 2^{n-1}$.
	The upper bound follows from the polarization identity.
\end{proof}

Note that Ranestad and Schreyer \cite{MR2842085} actually proved
\[
	\crk(x_1^{\alpha_1}x_2^{\alpha_2}\cdots x_n^{\alpha_n})=\prod_{i=2}^n(1+\alpha_i)
\]
where $\alpha_1\ge\cdots\ge \alpha_n\ge 1$, hence cactus ranks of monomials are completely determined. 
Border cactus ranks of general monomials are not.

The cactus barrier indeed causes a problem since the \emph{cactus variety}, whose underlying set consists of $d$-forms of border cactus rank at most $r$, fills up the ambient space much faster than the secant variety.
As a consequence, linear rank methods cannot give a lower bound better than the border cactus rank, no matter how hard one tries to find a good linear map.

As Buczy{\'n}ski \cite{Buc26} pointed out, only two results are known to go beyond the cactus barrier \cite{MR4634294,DM26}.
While \cite[Theorem 1.2]{BB26} implies that weak border apolarity is subject to the cactus barrier (see \cite[Section 1.5]{Buc26}), an additional criterion might be used to go beyond the cactus barrier.
See also \cite{MR4595287} for a systematic use of the fixed ideal theorem \cite{MR4332674}.

Monomials that do not satisfy Oeding's condition may require a method that goes beyond the cactus barrier.
Indeed, as Oeding pointed out in \cite[Remark 1.6]{Oed16}, Young flattenings are unlikely to reach the conjectural border Waring rank for such monomials.
Still, further study of Young flattenings of monomials seems necessary, as even determining their ranks is very hard (\Cref{question:Oeding}).

\bibliographystyle{amsalpha}
\bibliography{ref.bib}
\end{document}